\newtheorem{thm}{Théorème}
\newtheorem{lem}{Lemme}
\newtheorem{prop}{Proposition}
\newtheorem{cor}{Corollaire}
\newtheorem{propriete}{Propriété}
\newtheorem{defi}{Définition}
\newcommand{\R}{\mathbb{R}}
\newcommand{\PP}{\mathbb{P}}
\begin{document}

\title{Goodness of fit statistics for sparse contingency tables}
\author{Audrey Finkler}
\date{}

\maketitle


\numberwithin{equation}{section}
\setcounter{secnumdepth}{3}

\begin{abstract}
Statistical data is often analyzed as a contingency table, sometimes with empty cells called zeros.
Such sparse tables can be due to scarse observations classified in numerous categories, as for example in genetic association studies.
Thus, classical independence tests involving Pearson's chi-square statistic $Q$ or Kullback's minimum discrimination information statistic $G$ cannot be applied because some of the expected frequencies are too small.
More generally, we consider goodness of fit tests with composite hypotheses for sparse multinomial vectors and suggest simple corrections for $Q$ and $G$ that improve and generalize known procedures such as Ku's.
We show that the corrected statistics share the same asymptotic distribution as the initial statistics.
We produce Monte Carlo estimations for the type I and type II errors on a toy example.
Finally, we apply the corrected statistics to independence tests on epidemiologic and ecological data.
\end{abstract}

\section{Introduction and notations} \label{sec1}

Physical, sociological or biological surveys often lead to data presented as contingency tables.
These surveys aim at studying relationships such as total, mutual, partial or conditional independence between several characters in a population. 
Table notations are very useful to formulate the test and make the corresponding hypotheses explicit, but can be cumbersome when the number of characters exceeds three.
For theoretical results, we therefore use vector notations instead, and we reformulate the independence test as a multinomial goodness of fit test.

\subsection{Goodness of fit tests}

Let $p=(p_1,\ldots,p_R)$ be a probability vector of dimension $R$ where $R \geqslant 2$ is the total number of cross-classifying categories.
Let $n$ be the sample size and $x=(n_1,\ldots,n_R)$ the vector of observed frequencies, realization of $X=(N_1,\ldots,N_R)$ with distribution $\mathcal M(n;p)$, multinomial distribution with parameters $n$ and $p$.
We denote by $p^0$ the probability vector under the null hypothesis and consider the following test :
\begin{equation}
{\cal H}_0 :\quad p=p^0 \quad \text{ against } \quad {\cal H}_1 :\quad p \neq p^0.
\end{equation}

Popular goodness of fit statistics include Pearson's chi-square statistic $Q$ and Kullback's minimum discrimination information statistic $G$, defined in~\cite{r2,r1}.
For two probability distributions $p$ and $p'$, these are written :
\begin{equation}
Q_p(p')=n \sum_{r=1}^{R} \frac{(p'_r-p_r)^2}{p_r},
\end{equation}
and
\begin{equation} \label{Ggene}
G_p(p')=2n \sum_{r=1}^{R} p'_r \ln \frac{p'_r}{p_r}.
\end{equation}
They belong to the power divergence statistics family $\{ RC^{\lambda}, \lambda \in \R\}$ defined by Read and Cressie in \cite{r3}, respectively for $\lambda=1$ and $\lambda \to 0$, where :
\begin{equation}
RC^{\lambda}_p(p')=\frac{2n}{\lambda (\lambda + 1) } \sum_{r=1}^R p'_r \left[ \left( \frac{p'_r}{p_r} \right)^{\lambda}-1 \right].
\end{equation}
For a review on goodness of fit testing methods and statistics, see \cite{r10}.

The vector $p^0$ is not always completely specified.
We assume that $p^0$ is a known function of an unknown parameter $\theta$ of $\Theta \subseteq \R^s$ with $s<R-1$, and denote $p^0=p^0(\theta)$ with $\theta=(\theta_1,\ldots,\theta_s)$.
We suppose that the functions $\theta \mapsto p^0(\theta)$ we consider here are bijective and estimate $p^0(\theta)$ by $p^{*0}=p^0(\theta^*)$ where $\theta^*$ is the maximum likelihood estimator of $\theta$.
As the probability $p$ is generally unknown, we also estimate the $p_{r}$ for $r$ in $\{1,\ldots,R\}$ by their maximum likelihood estimators $p^*_r=n_{r}/n$.
Throughout this paper, underlying indexes $n$ are omitted for simplicity of notation, and we consider the statistics $Q_{p^{*0}}(p^*)$ and $G_{p^{*0}}(p^*).$

Read and Cressie show that under Birch's regularity conditions, see \cite{r11}, the statistics $RC^{\lambda}$ are asymptotically equivalent and share a common chi-square limit distribution:
\begin{thm} \label{asymp}
\begin{equation} 
\forall \lambda \in \R, \quad \lim_{n \to +\infty} \mathcal L_{{\cal H}_0}(RC^{\lambda}_{p^{*0}}(p^*))=\chi^2_{R-s-1}.
\end{equation}
\end{thm}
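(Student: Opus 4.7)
The plan is to reduce every power-divergence statistic $RC^{\lambda}_{p^{*0}}(p^*)$ to Pearson's $Q_{p^{*0}}(p^*)$ by a local Taylor expansion, and then invoke the classical chi-square limit for Pearson's statistic under a composite hypothesis estimated by maximum likelihood.

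First, I would establish uniform control of the relative errors. Under ${\cal H}_0$ and Birch's regularity conditions, the MLE satisfies $\theta^* \to \theta_0$ in probability, so $p^{*0}_r \to p_r > 0$ and $\sqrt{n}(p^{*0}_r - p_r) = O_P(1)$. The multinomial CLT gives $\sqrt{n}(p^*_r - p_r) = O_P(1)$ as well. Setting $\varepsilon_r = p^*_r/p^{*0}_r - 1$, one obtains $\max_r |\varepsilon_r| = O_P(n^{-1/2})$, and the identity $\sum_r p^{*0}_r \varepsilon_r = \sum_r (p^*_r - p^{*0}_r) = 0$ will be used repeatedly.

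Next, for $\lambda \notin \{0,-1\}$ I would write
\begin{equation*}
RC^{\lambda}_{p^{*0}}(p^*) = \frac{2n}{\lambda(\lambda+1)} \sum_{r=1}^{R} p^{*0}_r \bigl[(1+\varepsilon_r)^{\lambda+1} - (1+\varepsilon_r)\bigr],
\end{equation*}
and Taylor-expand $(1+\varepsilon)^{\lambda+1} - (1+\varepsilon) = \lambda \varepsilon + \tfrac{\lambda(\lambda+1)}{2}\varepsilon^2 + O(\varepsilon^3)$ near $\varepsilon=0$. The linear term vanishes by the identity above, the quadratic term reproduces exactly $Q_{p^{*0}}(p^*) = n \sum_r (p^*_r - p^{*0}_r)^2 / p^{*0}_r$, and the cubic remainder is $O_P(n \cdot n^{-3/2}) = o_P(1)$. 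The boundary cases $\lambda=0$ (expand $u \ln u$) and $\lambda=-1$ (expand $-\ln u$) are handled by the same argument applied to the limiting form of $RC^{\lambda}$, yielding
\begin{equation*}
RC^{\lambda}_{p^{*0}}(p^*) = Q_{p^{*0}}(p^*) + o_P(1) \quad \text{for every } \lambda \in \R.
\end{equation*}

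Finally, it suffices to prove $Q_{p^{*0}}(p^*) \xrightarrow{d} \chi^2_{R-s-1}$. Birch's conditions provide the joint asymptotic normality of $\sqrt{n}(p^* - p, \theta^* - \theta_0)$ together with a first-order expansion $p^{*0} - p = J(\theta_0)(\theta^* - \theta_0) + o_P(n^{-1/2})$, where $J = \partial p^0/\partial \theta$ has full rank $s$. A standard projection argument in the Fisher metric shows that $\sqrt{n}(p^*_r - p^{*0}_r)/\sqrt{p_r}$ converges to a centered Gaussian vector whose covariance is the orthogonal projector onto an $(R-s-1)$-dimensional subspace (the orthogonal complement of the tangent space of the model and of the vector $(\sqrt{p_r})_r$), giving the claimed $\chi^2_{R-s-1}$ limit.

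The main obstacle is the last step, namely the chi-square limit for $Q$ itself, which relies on the nontrivial geometric identification of the limiting covariance as an orthogonal projector of rank $R-s-1$; the Taylor-reduction step is essentially routine once the uniform bound $\max_r |\varepsilon_r| = O_P(n^{-1/2})$ is secured, which in turn requires the strict positivity of the $p_r$ guaranteed by Birch's assumptions.
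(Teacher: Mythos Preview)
Your proof sketch is correct and follows the standard Read--Cressie argument: Taylor-expand each power-divergence statistic around Pearson's $Q$ using $\max_r|\varepsilon_r|=O_P(n^{-1/2})$, then appeal to the Pearson--Fisher theorem for the $\chi^2_{R-s-1}$ limit of $Q$ under composite hypotheses.

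However, the paper does not actually prove this theorem. It is stated as a citation: the sentence preceding the theorem reads ``Read and Cressie show that under Birch's regularity conditions, see~\cite{r11}, the statistics $RC^{\lambda}$ are asymptotically equivalent and share a common chi-square limit distribution,'' and the theorem is then displayed without proof. The result is imported from \cite{r3} (for the equivalence of all $RC^{\lambda}$) and \cite{r11} (for the chi-square limit of $Q$ with $s$ estimated parameters). So there is nothing in the paper to compare your argument against; you have supplied precisely the proof that the cited references contain, and in particular your ``main obstacle'' --- the rank-$(R-s-1)$ projector identification --- is exactly Birch's 1964 contribution in \cite{r11}.
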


This implies the following result :

\begin{cor}
\begin{equation} 
\lim_{n \to +\infty} \mathcal L_{{\cal H}_0}(Q_{p^{*0}}(p^*))=\lim_{n \to +\infty} \mathcal L_{{\cal H}_0}(G_{p^{*0}}(p^*))=\chi^2_{R-s-1}.
\end{equation}
\end{cor}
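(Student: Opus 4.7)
The approach I would take is to identify $Q$ and $G$ as two specific members of the power divergence family $\{RC^\lambda : \lambda \in \R\}$ and then invoke Theorem~\ref{asymp} directly, at $\lambda = 1$ and at $\lambda = 0$ respectively. The excerpt already announces this correspondence, so the substance of the proof reduces to justifying the two identifications algebraically; no new probabilistic argument is required.

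For Pearson's statistic, I would substitute $\lambda = 1$ into the definition of $RC^\lambda_p(p')$, which collapses the prefactor $2n/(\lambda(\lambda+1))$ to $n$ and leaves $n \sum_r p'_r[(p'_r/p_r) - 1] = n \sum_r (p'_r)^2/p_r - n$. Expanding $Q_p(p') = n \sum_r (p'_r - p_r)^2/p_r$ and using $\sum_r p'_r = \sum_r p_r = 1$ to eliminate the linear cross-terms yields exactly the same expression, so $Q_p(p') = RC^1_p(p')$ and Theorem~\ref{asymp} delivers the first equality.

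For Kullback's statistic, I would not substitute $\lambda = 0$ directly (the formula is singular there) but rather take the limit. Inserting the Taylor expansion $(p'_r/p_r)^\lambda = 1 + \lambda \ln(p'_r/p_r) + O(\lambda^2)$ inside the sum, the leading factor $\lambda$ cancels against the $1/\lambda$ coming from the prefactor, and one obtains $\lim_{\lambda \to 0} RC^\lambda_p(p') = 2n \sum_r p'_r \ln(p'_r/p_r) = G_p(p')$. Thus $G$ is the continuous extension of the family at $\lambda = 0$, and Theorem~\ref{asymp} at this value gives the second equality.

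The main ``obstacle'' is purely a matter of convention: since the closed-form expression for $RC^\lambda$ is singular at $\lambda = 0$ and at $\lambda = -1$, the quantifier \emph{for all $\lambda \in \R$} in Theorem~\ref{asymp} has to be read as covering the continuous extensions of $RC^\lambda$ at those two values. This is the standard Read--Cressie convention, and is precisely what the authors signal by writing ``respectively for $\lambda=1$ and $\lambda \to 0$''. Once this is acknowledged, the corollary is a routine specialization of the theorem with no substantive work left.
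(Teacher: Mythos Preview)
Your proposal is correct and matches the paper's approach exactly: the paper states the corollary with the words ``This implies the following result'' and gives no further proof, relying on the already-announced identification of $Q$ and $G$ with $RC^{1}$ and $RC^{\lambda}|_{\lambda\to 0}$. Your algebraic verifications simply spell out what the paper leaves implicit.
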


This asymptotic result is a consequence of the Central Limit Theorem.
Classical empirical limitations include that the sample size $n$ must be over~$30$ and that all expected frequencies must be over $5$.
Usually $G_{p^{*0}}(p^*)$ is preferred to $Q_{p^{*0}}(p^*)$ because it is less sensitive to small cell frequencies.
Read and Cressie recommend the use of $RC^{2/3}_{p^{*0}}(p^*)$ instead for $n \geqslant 10$ and a minimum expected frequency over $1$.
Sometimes, even the lower bound $0.5$ is accepted for the expected frequencies.
A review on these conditions can be found in \cite{r4}.

\subsection{Sparse tables}

When there are too few subjects in the study or when the classifying categories are too numerous, the table comprises one or several empty cells called random zeros.
The table is then called sparse and it is likely that at least one cell has an expected frequency below $0.5$.
Random zeros would not appear if the sample was of sufficient size.
Structural zeros, corresponding to cells with an expected probability of zero, are not considered here and should be suppressed.
We therefore assume that the following condition is satisfied :
\begin{equation} \label{nonnul}
p^{*0}_r \neq 0, \: r \in \{1,\ldots,R\}.
\end{equation}

Sparse tables can nonetheless be tested for independence, for example by regrouping cells so that the condition on the expected frequencies is satisfied.
However this procedure is not always data relevant.
Fisher's exact test given in \cite{r5} applies without restrictions, except that it becomes numerically unmanageable when the table dimension grows.
This leads to the use of Monte Carlo simulation methods as explained in \cite{r6}.

The approach we propose here consists in correcting the historical statistics $Q_{p^{*0}}(p^*)$ and $G_{p^{*0}}(p^*)$ according to the number of zero cells, by generalizing and improving a method designed by Ku in \cite{r7}.  

\section{Corrections for Pearson's and Kullback's statistics}

Let $C$ be the random variable giving the number of zeros in the vector or the contingency table, and $c$ a realization of $C$ with $R-c \geqslant 1$.
For simplicity, we assume that $n_1=n_2=\dots=n_c=0$ and that $n_j \geqslant 1$ for all $ j$ in $ \{c+1,\ldots,R\}$. 
The maximum likelihood estimator $p^*=(0,\ldots,0,n_{c+1}/n,\ldots,n_R/n)$ underestimates the $p_i$ for $i$ in $\{1,\ldots,c\}$ and overestimates the $p_j$ for $j$ in $\{c+1,\ldots,R\}$.
Its use when $c \neq 0$ thus has consequences on the statistics $Q_{p^{*0}}(p^*)$ and $G_{p^{*0}}(p^*)$.

\subsection{Ku's correction for one zero}

Ku argues in \cite{r7} that $G_{p^{*0}}(p^*)$ tends to inflate with respect to $Q_{p^{*0}}(p^*)$ when $C$ grows.
He then proposes to subtract~$1$ from $G_{p^{*0}}(p^*)$ for each zero, that is $c$ in total.
He proves the asymptotic equivalence between $G_{p^{*0}}(p^*)$ and its corrected version only for $c=1$.
I will explain why his reasoning is inconsistent.
First, he considers a new statistic which is not a member of the power divergence family.
Moreover, he uses the straightforward Lemma \ref{lem1} to deduce the approximation:
\begin{equation} \label{approx}
2 n_r \ln \left( \frac{n_r}{n p_r^{*0}} \right) \simeq \frac{n_r ^2 - (n p_r^{*0})^2}{n p_r^{*0}},
\end{equation}
for $a=n_r/n$, $b= p_r^{*0}$ and $n_r=0$, despite the fact that $a=0$ and $1/a$ is not bounded.

\begin{lem} \label{lem1}
For each $a, b >0$ such that $a<2b$, $b<2a$ and the quantities $1/a$ and $1/b$ are bounded, we have :
$$ \ln \left( \frac{a}{b} \right) = \frac{a^2-b^2}{2ab}+o(a-b).$$ 
\end{lem}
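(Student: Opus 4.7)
My plan is to derive the symmetric expression $(a^2-b^2)/(2ab)$ as the average of two asymmetric first-order Taylor approximations of $\ln(a/b)$.

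I would begin by writing $\ln(a/b) = \ln(1 + (a-b)/b)$. The hypotheses $a < 2b$ and $b < 2a$ place the argument $(a-b)/b$ strictly inside the disk of convergence $(-1,1)$ of the Mercator series, so I can apply the expansion $\ln(1+u) = u + O(u^2)$ as $u \to 0$ to obtain
\[
\ln\!\left(\frac{a}{b}\right) = \frac{a-b}{b} + O\!\left(\frac{(a-b)^2}{b^2}\right).
\]
Since $1/b$ is bounded, the remainder collapses to $O((a-b)^2) = o(a-b)$ as $a-b \to 0$. Swapping the roles of $a$ and $b$ (and using $\ln(a/b) = -\ln(b/a)$), the same argument yields the companion expansion $\ln(a/b) = (a-b)/a + o(a-b)$.

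Averaging these two expansions then symmetrises the result:
\[
\ln\!\left(\frac{a}{b}\right) = \frac{a-b}{2}\left(\frac{1}{a} + \frac{1}{b}\right) + o(a-b) = \frac{(a-b)(a+b)}{2ab} + o(a-b) = \frac{a^2-b^2}{2ab} + o(a-b),
\]
which is the stated identity.

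\textbf{Main obstacle.} I do not anticipate any serious difficulty, since the proof is essentially bookkeeping on the Taylor series of $\ln(1+u)$. The only subtle point is the role of the hypotheses: $a < 2b$ and $b < 2a$ are exactly what keep $(a-b)/b$ and $(a-b)/a$ bounded away from $\pm 1$ so that the Taylor expansion is legitimate, while the boundedness of $1/a$ and $1/b$ is precisely what lets the quadratic remainder $O((a-b)^2/b^2)$ be absorbed into a clean $o(a-b)$ as $a - b \to 0$. It is worth noting, in view of the way the lemma is used in equation~(\ref{approx}), that this absorption genuinely breaks down when $a = 0$, which is the inconsistency the author flags in Ku's argument.
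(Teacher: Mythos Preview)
Your argument is correct. The symmetrisation trick --- expanding $\ln(1+(a-b)/b)$ and $-\ln(1+(b-a)/a)$ to first order and averaging --- is clean, and your identification of where each hypothesis enters (the ratio bounds for convergence, the boundedness of $1/a,1/b$ to collapse $O((a-b)^2/b^2)$ into $o(a-b)$) is accurate.

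As for comparison: the paper does not actually prove Lemma~\ref{lem1}. It is introduced only as ``the straightforward Lemma~\ref{lem1}'' in the course of criticising Ku's argument, and no proof is supplied. Your write-up therefore fills a gap the paper leaves open, and your closing remark --- that the absorption of the remainder genuinely fails at $a=0$, which is precisely the flaw in Ku's use of~\eqref{approx} --- matches the author's own point exactly.
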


The expression on the left in \eqref{approx} is null whereas the one on the right is negative.
The sum over $r$ of the left-hand side gives $G_{p^{*0}}(p^*)$ and we recognize $Q_{p^{*0}}(p^*)$ in the sum of the right-hand side.
However, unlike Ku seems to think, zero and non-zero cells tend to compensate for each other and the approximation \eqref{approx} can not be extended to the corresponding sum.
There is therefore no behavior of $G$ and $Q$ that we can deduce from this.
Finally, he illustrates this asymptotic result on a small sample of size $n=10$.

We propose new corrections for both statistics, based on Ku's correction and a likelihood inequality that we present in the next subsection.

\subsection{Likelihood inequality}

Let us consider the following inequality coming from a likelihood reasoning : the sample vector we observe can be thought of more likely to happen than any other possible vector, since it is the one we actually observed.
With exactly $c$ zeros and $R-c$ non-zero cells observed, so for all $m \leqslant c$ and $n'_j$ such that $\displaystyle n'_j \leqslant n_j, \ \forall j \in \{c+1,\ldots,R\}$ and 
$ n=\sum_{i=1}^{m} n'_i + \sum_{j=c+1}^{R}n'_j $ :
\begin{multline} \label{likely}
\PP(N_1=0,\ldots,N_c=0,N_{c+1}=n_{c+1},\ldots,N_R=n_R) \geqslant \\
  \PP(N_1=n'_1,\ldots, N_m=n'_m,N_{m+1}=\ldots=N_c=0,N_{c+1}=n'_{c+1},\ldots,N_R=n'_R).
\end{multline}

We give in Proposition \ref{condition} a sufficient condition on the $p_r$ for \eqref{likely} to be satisfied, and prove this statement in Appendix \ref{app}.

\begin{prop} \label{condition}
The inequality \eqref{likely} is satisfied under the following assumption : 
\begin{equation} \label{cond2}
p_i \leqslant \frac{ p_j}{n},\quad \forall i \in \{1,\ldots,c\}, \quad \forall j \in \{c+1,\ldots,R\}.
\end{equation}
\end{prop}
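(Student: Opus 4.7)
The plan is to expand both sides of \eqref{likely} using the multinomial probability mass function and show that the ratio of the left-hand side over the right-hand side is at least~$1$ under hypothesis \eqref{cond2}. Writing the LHS as
$$\frac{n!}{n_{c+1}!\cdots n_R!}\prod_{j=c+1}^{R} p_j^{n_j}$$
and the RHS as
$$\frac{n!}{n'_1!\cdots n'_m!\,n'_{c+1}!\cdots n'_R!}\prod_{i=1}^{m} p_i^{n'_i}\prod_{j=c+1}^{R} p_j^{n'_j},$$
I would introduce $\Delta_j=n_j-n'_j\geqslant 0$ for $j\in\{c+1,\ldots,R\}$ and observe that since both configurations sum to $n$, one has $S:=\sum_{j=c+1}^{R}\Delta_j=\sum_{i=1}^{m}n'_i$. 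The ratio then factorizes into a combinatorial part and a probabilistic part.

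For the combinatorial part, I would use the elementary bounds $n'_i!\geqslant 1$ and
$$\frac{n'_j!}{n_j!}=\prod_{k=n'_j+1}^{n_j}\frac{1}{k}\geqslant \frac{1}{n^{\Delta_j}},$$
which holds since every $k$ in the product satisfies $k\leqslant n_j\leqslant n$. These bounds reduce the inequality to be proved to
$$\prod_{j=c+1}^{R} p_j^{\Delta_j}\geqslant n^{S}\prod_{i=1}^{m} p_i^{n'_i}.$$

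To close the argument, I would exploit that the two sides have matching total exponents equal to $S$. This allows a pairing argument: list the $\Delta_j$ factors of $p_j$ on the left and the $n'_i$ factors of $p_i$ on the right as two sequences of $S$ terms each, and pair them off arbitrarily. Each pair contributes a factor $p_j$ on the left and $np_i$ on the right, and hypothesis \eqref{cond2} provides exactly the pointwise inequality $p_j\geqslant n\,p_i$ needed. Multiplying over the $S$ pairs gives the required inequality, completing the proof.

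The main subtle point is the pairing step: one has to be careful that the accounting of exponents matches up before invoking the per-pair inequality, as \eqref{cond2} is only available for index pairs with $i\leqslant c<j$. The equality $\sum_i n'_i=\sum_j\Delta_j$ coming from the common total sample size $n$ is exactly what makes the pairing well defined, and this is the only nontrivial combinatorial ingredient of the proof.
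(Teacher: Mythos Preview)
Your argument is correct and follows essentially the same strategy as the paper's proof: expand the multinomial probabilities, bound the factorial terms by $n'_i!\geqslant 1$ and $n_j!/n'_j!\leqslant n^{\Delta_j}$, and then exploit the exponent identity $\sum_j\Delta_j=\sum_i n'_i$ to apply \eqref{cond2} factor by factor. Your pairing formulation is exactly the paper's ``applying \eqref{cond2} with multiplicities'' step, just stated more explicitly.
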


\subsection{Corrections}

We propose an estimator $\hat p$ for $p$, different from the maximum likelihood estimator, with the following form :
\begin{equation}
\begin{cases}
\displaystyle \hat p_i &= a, \quad \quad \quad \quad \quad \quad \quad \ \ \forall i \in \{1,\ldots,c \}, \\
\displaystyle \hat p_j &= \displaystyle \frac{n_j}{n^b}-d,  \quad \quad \quad \quad \quad \: \forall j \in \{c+1,\ldots,R \},\\
\end{cases}
\end{equation}
where $a=a_n,\ b=b_n$ and $d=d_n$ are random variables depending on $n$ designed to compensate for the under- and overestimations due to $p^*$.
We thus take them positive with $0 < b <1$, such that $b$ inflates the modified maximum likelihood estimator $n_j/n^b$ and such that $d$ controls the related rise.

This new probability vector allows us to define corrected statistics $Q_{p^{*0}}(\hat p)$ and $G_{p^{*0}}(\hat p)$, provided that the parameters $a,\ b$ and $d$ satisfy several conditions.
Forcing the summation of the $\hat p_r$ to $1$ implies that
$(R-c)d=ac+n^{1-b}-1$,
and thus allows us to define $Q_{p^{*0}}(\hat p^{ab})$ and $G_{p^{*0}}(\hat p^{ab})$ with $\hat p^{ab}$ such that :
\begin{defi}
\begin{equation}
\label{estphat}
\begin{cases}
\displaystyle \hat p_i^{ab} &= a, \quad \quad \quad \quad \quad \quad \quad \quad \quad \forall i \in \{1,\ldots,c \}, \\
\displaystyle \hat p_j^{ab} &= \displaystyle \frac{n_j}{n^b}-\frac{ac+n^{1-b}-1}{R-c}, \quad \: \forall j \in \{c+1,\ldots,R \}.\\
\end{cases}
\end{equation}
\end{defi}
Note that for $c=0$, we fix $a=0$ and $b=1$.

Before considering the other conditions on $\hat p^{ab}$, let us first give some notations.
Let $\underline n$ be $\min_{j \in \{c+1,\ldots,R\}} \{n_j\}$ and $\overline n$ be $\max_{j \in \{c+1,\ldots,R\}} \{n_j\}.$
Let $\underline{\underline{n}}$ stand for $n-\underline n (R-c)$ and $\overline{\overline{n}}$ for $\overline n (R-c)-n$.
We dismiss the uniformly distributed case where :
\begin{equation}
\underline n = \overline n=n_j = \frac{n}{R-c}, \: \forall j \in \{c+1,\ldots,R\},
\end{equation}
thus guaranteeing that $\underline{\underline{n}}$ and $\overline{\overline{n}}$ are positive.
Let 
\begin{equation}
b_{\text{min}}=\max \left(0,\frac{\ln \left( \overline{\overline{n}} /(R-1)\right)}{\ln(n)},\frac{\ln (\underline{\underline{n}})}{\ln(n)},\frac{\ln \left( \overline n - \underline n \right)}{\ln(n)}\right),
\end{equation}
\begin{equation}
a_{\text{min}}(b)=\max \left(0,\frac{(\overline n-n^b)(R-c)+n^b}{c n^b}\right),
\end{equation}
and 
\begin{equation}
a_{\text{max}}(b)=\min \left(1,\frac{n^b-\underline{\underline{n}}}{c n^b},\frac{n^b-\underline{\underline{n}}}{n^b (n(R-c)+c)}\right),
\end{equation}
Proposition \ref{condition} is applied to $\hat p^{ab}.$
Together with inequalities $\displaystyle 0<\hat p_r^{ab} <1$, for all $r$ in $\{1,\ldots,R\}$ it is then equivalent to these conditions on $a$ and $b$:
\begin{equation} \label{interb}
b_{\text{min}}<b<b_{\text{max}}=1,
\end{equation}
and 
\begin{equation} \label{intera}
a_{\text{min}}(b)<a< a_{\text{max}}(b).
\end{equation}

We want to make a practical choice among possible values of $\hat p^{ab}$ ensuring us that it is as far from $p^*$ as possible.
We therefore fix $b$ in \eqref{interb} quite far from $1$ as a convex combination of $b_{\text{min}}$ and $b_{\text{max}}$ with an empirical parameter $h$ equal to $0.1$.
For this value of $b$ we choose $a$ near the upper limit of the interval in \eqref{intera}, that is :
\begin{equation}
b=h b_{\text{max}}+(1-h) b_{\text{min}} \quad \text{ and } \quad a=a_{\text{max}}(b)-\epsilon,
\end{equation}
where $\epsilon$ is a small constant designed to eliminate boundary effects.

The final expressions we get for the corrected statistics $Q^{ab}=Q_{p^{*0}}(\hat p^{ab})$ and $G^{ab}=G_{p^{*0}}(\hat p^{ab})$ of $Q=Q_{p^{*0}}(p^*)$ and $G=G_{p^{*0}}(p^*)$ are:
\begin{equation}
Q^{ab}= n^{2(1-b)} Q-f(a,b),
\end{equation}
with
\begin{multline}
f(a,b)=n \left(1-n^{2(1-b)}+ \frac{2 n^{1-b}(ac+n^{1-b}-1)}{R-c} \sum_{j=c+1}^{R} \frac{n_j}{n p^{*0}_j} \right.\\
\left.-a^2 \sum_{i=1}^{c} \frac{1}{p^{*0}_i}-\left(\frac{ac+n^{1-b}-1}{R-c}\right)^2 \sum_{j=c+1}^{R} \frac{1}{p^{*0}_j} \right),
\end{multline}
and
\begin{equation}
G^{ab}=n^{1-b} G-g(a,b),
\end{equation}
where
\begin{multline}
g(a,b)= 2n \left( \frac{ac+n^{1-b}-1}{R-c} \sum_{j=c+1}^{R} \ln \left(\frac{n_j (R-c)- n^b(ac+n^{1-b}-1)}{p^{*0}_j n^b (R-c)} \right) \right. \\
\left. -a\sum_{i=1}^{c} \ln \left( \frac{a}{p_i^{*0}} \right)-n^{1-b} \sum_{j=c+1}^{R} \frac{n_j}{n} \ln \left( \frac{n_j (R-c)-n^b(ac+n^{1-b}-1)}{n_j n^{b-1} (R-c)} \right) \right).
\end{multline}

\subsection{Convergence}

In this paragraph, we show the convergence in distribution of $Q^{ab}$ and $G^{ab}$ to a chi-square distribution.
Let us first study the parameter $b$.

\begin{propriete} \label{propb}
Bound $b_{\text{min}}$ is strictly less than $1$.
Moreover, if $\underline n=o(n)$ and $\overline n \sim n$ when $n$ tends to $+ \infty$, then $b_{\text{min}}$ and $b$ tend to $1$.
\end{propriete}

\begin{proof}
Inequalities $\overline n (R-c) < n R$, $\: n-\underline n (R-c)<n$ and $\overline n-\underline n<n$ show that all three components of the maximum defining $b_{\text{min}}$ are strictly less than~$1$. 
Their order $1$ developments as $n$ tends to $+ \infty$ give their convergence to~$1$, with the quantities $R-1$, $R-c$ and $R-c-1$ bounded.
\end{proof}

We also study the asymptotic behavior of $C$, for once denoted $C_n$, and state the following lemma which proof appears in Appendix \ref{app}.

\begin{lem} \label{lem2}
The number of zeros $C_n$ converges almost surely to $0$ as $n$ tends to $+ \infty$.
\end{lem}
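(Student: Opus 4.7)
The plan is to apply the first Borel--Cantelli lemma to the events $\{C_n \geqslant 1\}$. Implicit in the statement ``$C_n$ converges almost surely'' is that the multinomial vectors for different sample sizes live on a common probability space; the natural coupling is to take i.i.d.\ draws $Y_1, Y_2, \ldots$ in $\{1,\ldots,R\}$ with law $p$ and set $N_r^{(n)} = \sum_{k=1}^{n} \ind_{\{Y_k = r\}}$, so that $(N_1^{(n)},\ldots,N_R^{(n)}) \sim \mathcal M(n;p)$ for each $n$.

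First I would note that by hypothesis \eqref{nonnul} (no structural zeros), $p_r > 0$ for every $r \in \{1,\ldots,R\}$. Since each marginal $N_r^{(n)}$ follows $\mathcal B(n,p_r)$, we have
\begin{equation*}
\PP(N_r^{(n)}=0) = (1-p_r)^n.
\end{equation*}
A union bound over $r$ then gives
\begin{equation*}
\PP(C_n \geqslant 1) \;=\; \PP\!\Bigl(\bigcup_{r=1}^{R} \{N_r^{(n)}=0\}\Bigr) \;\leqslant\; \sum_{r=1}^{R}(1-p_r)^n.
\end{equation*}

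Since each term $(1-p_r)^n$ is the general term of a convergent geometric series (because $0 < 1-p_r < 1$), summing over $n$ yields
\begin{equation*}
\sum_{n=1}^{+\infty} \PP(C_n \geqslant 1) \;\leqslant\; \sum_{r=1}^{R} \frac{1-p_r}{p_r} \;<\; +\infty.
\end{equation*}
By the first Borel--Cantelli lemma, $\PP(C_n \geqslant 1 \text{ infinitely often}) = 0$, so almost surely $C_n = 0$ for all sufficiently large $n$. Since $C_n$ takes integer values in $\{0,1,\ldots,R\}$, this is exactly almost sure convergence of $C_n$ to $0$.

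There is no real obstacle here; the only delicate point is making explicit the common probability space on which the sequence $(C_n)$ is defined, so that almost sure convergence has an unambiguous meaning. Everything else reduces to the elementary summability of the geometric series $\sum_n (1-p_r)^n$ under the standing assumption that no $p_r$ vanishes.
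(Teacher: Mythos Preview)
Your argument is correct and follows the same overall strategy as the paper: bound $\PP(C_n\geqslant 1)$ by a summable sequence and invoke the first Borel--Cantelli lemma. The difference lies in how the bound is obtained. The paper attempts to compute $\PP(C_n=c)$ for each $c\in\{1,\ldots,R-1\}$ via the probability that a fixed block of $c$ cells is empty, then sums over $c$; you instead take a union bound directly over the $R$ events $\{N_r^{(n)}=0\}$. Your route is shorter and more transparent---it sidesteps the combinatorics of choosing which cells are empty and the dependence of $q^0$ on that choice---while the paper's decomposition would, if carried out carefully, yield finer information on the distribution of $C_n$ that is not actually needed here. Your explicit construction of the common probability space via i.i.d.\ draws is also a welcome clarification that the paper leaves implicit.
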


We recalled in section \ref{sec1} the convergence of Pearson's and Kullback's statistics to a chi-square distribution.
In Theorem \ref{thm7}, we state a similar property for the corrected statistics $Q^{ab}$ and $G^{ab}$.

\begin{thm} \label{thm7}
Under Birch's regularity conditions in \cite{r11}, the estimator $\hat p^{ab}$ defined by \eqref{estphat}, \eqref{interb} and \eqref{intera} is such that :
\begin{equation}
\lim_{n \to +\infty} \mathcal L_{{\cal H}_0} (Q^{ab})=\lim_{n \to +\infty} \mathcal L_{{\cal H}_0} (G^{ab})=\chi^2_{R-s-1}.
\end{equation}
\end{thm}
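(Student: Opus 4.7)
The plan is to reduce Theorem \ref{thm7} to Theorem \ref{asymp} via Slutsky's lemma, using Lemma \ref{lem2} to argue that the correction is asymptotically invisible.

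First I would verify that on the event $\{C_n = 0\}$ the corrected statistics coincide with the uncorrected ones. When $c=0$, the convention $a=0$, $b=1$ gives, for every $j \in \{1,\ldots,R\}$,
\begin{equation*}
\hat p^{ab}_j = \frac{n_j}{n^1} - \frac{0\cdot 0 + n^0 - 1}{R} = \frac{n_j}{n} = p^*_j,
\end{equation*}
so $\hat p^{ab} = p^*$. A direct substitution of $c=0$, $a=0$, $b=1$ into the expressions of $f(a,b)$ and $g(a,b)$ makes both vanish, while $n^{2(1-b)} = n^{1-b} = 1$; hence $Q^{ab} = Q$ and $G^{ab} = G$ identically on $\{C_n = 0\}$.

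Next, Lemma \ref{lem2} gives $C_n \to 0$ almost surely. Since $C_n$ is integer-valued in $\{0, 1, \ldots, R-1\}$, this forces $C_n = 0$ for all $n$ large enough, almost surely, so $\PP(C_n = 0) \to 1$. Consequently
\begin{equation*}
\PP(Q^{ab} \neq Q) \leqslant \PP(C_n \geqslant 1) \longrightarrow 0, \qquad \PP(G^{ab} \neq G) \longrightarrow 0,
\end{equation*}
which yields $Q^{ab} - Q \to 0$ and $G^{ab} - G \to 0$ in probability under $\mathcal H_0$. Combining Theorem \ref{asymp} (applied with $\lambda=1$ for $Q$ and $\lambda \to 0$ for $G$) with Slutsky's lemma then delivers the desired convergence of $\mathcal L_{{\cal H}_0}(Q^{ab})$ and $\mathcal L_{{\cal H}_0}(G^{ab})$ to $\chi^2_{R-s-1}$.

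The real analytic difficulty is therefore displaced into Lemma \ref{lem2}, whose almost sure convergence $C_n \to 0$ under $\mathcal H_0$ (with \eqref{nonnul} guaranteeing strictly positive expected probabilities) is the load-bearing input. Along this streamlined route, Property \ref{propb} on $b \to 1$ is only reassuring and the careful calibration of $(a,b)$ inside \eqref{interb}--\eqref{intera} plays no active role in the limit. A more refined argument staying on $\{C_n \geqslant 1\}$ would require $(1-b)\ln n \to 0$ and explicit termwise bounds on $f(a,b)$ and $g(a,b)$, which looks considerably more painful and seems unnecessary given Lemma \ref{lem2}.
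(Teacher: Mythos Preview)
Your proposal is correct and follows essentially the same route as the paper: both reduce to Theorem~\ref{asymp} by invoking Lemma~\ref{lem2} to ensure $C_n=0$ eventually, whence $\hat p^{ab}=p^*$ and $Q^{ab}=Q$, $G^{ab}=G$. The only cosmetic difference is that the paper argues directly from the almost sure eventual equality on a set of probability~$1$, whereas you pass through $\PP(Q^{ab}\neq Q)\to 0$ and Slutsky; your explicit verification that $f(a,b)=g(a,b)=0$ when $c=0$ is a nice sanity check the paper leaves implicit.
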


\begin{proof}
We deduce from Lemma \ref{lem2} the existence of a set $\Omega'$ of probability~$1$ on which we can find a rank $n_0$ such that $C_n=0$ for $n \geqslant n_0$.
The variable~$a$ is then set equal to $0$ and the variable $b$ equal to $1$. 
Hence, the estimates $p^*$ and $\hat p^{ab}$ match, so that $Q^{ab}=Q$ and $G^{ab}=G$.
Theorem \ref{asymp} then completes the proof.
\end{proof}

The final two sections are dedicated to proving the relevancy of our corrections through simulations and real data analyses.

\section{Simulations}

In this section, simulations confirm the necessity to correct not only $G$ but also $Q$.
We compute the statistics $Q$, $G$, $RC^{2/3}=RC^{2/3}_{p^{*0}}(p^*)$, $Q^{ab}$ and $G^{ab}$ on $1000$ vectors of length $R=100$ of total frequency $n=400$ for each of the four multinomial distributions defined in Table \ref{Type1} by $f_1$ to $f_4$.  
Let $E_r$ denote the expected frequencies for $r$ in $\{1,\ldots,100\}$.

\begin{table}
\centering
\caption{Multinomial probabilities $f_1$ to $f_4$.\label{Type1}}
\begin{tabular}{ccc}
\hline
${\cal H}_0$ & $|\{r; \: E_r<0.5\}|$ & Probability \\
\hline
$f_1$ & $20$ & $( \underbrace{0.0002,\ldots,0.0002}_{20 \text{ times }},\underbrace{0.01245,\ldots,0.01245}_{80 \text{ times }} )$ \\
$f_2$ & $50$ & $( \underbrace{0.0002,\ldots,0.0002}_{50 \text{ times }},\underbrace{0.0198,\ldots,0.0198}_{50 \text{ times }} )$ \\
$f_3$ & $70$ & $( \underbrace{0.0002,\ldots,0.0002}_{70 \text{ times }},\underbrace{0.03286667,\ldots,0.03286667}_{30 \text{ times }})$ \\
$f_4$ & $90$ & $( \underbrace{0.0002,\ldots,0.0002}_{90 \text{ times }},\underbrace{0.0982,\ldots,0.0982}_{10 \text{ times }} )$ \\
\hline
\end{tabular}
\end{table}

For each set of vectors sharing the same $c$, we give the quantile of order $1-\alpha=95\%$ for the five statistics considered.
Results are displayed in figure~\ref{CQGetc1}, as well as a line indicating the chi-square quantile of order $1-\alpha=95\%$ with $R-1=99$ degrees of freedom $\chi_{0.95,99}^2=123.22$.
Only the center of each graph should be considered because the quantiles for extreme numbers of zeros are computed on too few observations, sometimes only on one or two among the $1000$ simulations in total.

\begin{figure}
\begin{center}
\includegraphics[width=12cm,height=18cm]{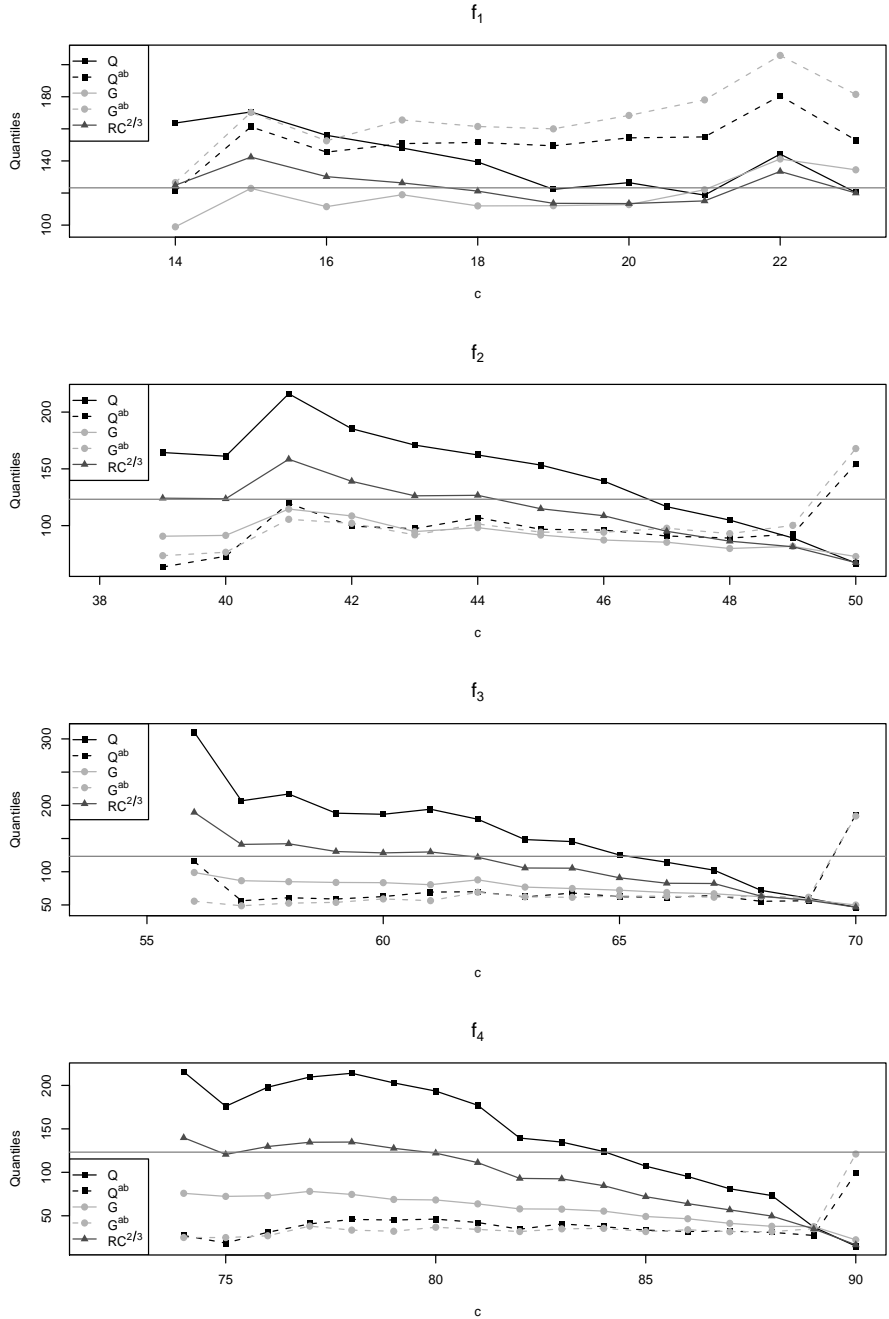}
\caption{Quantiles of order $0.95$ for $Q$, $Q^{ab}$, $G$, $G^{ab}$ and $RC^{2/3}$ as functions of $c$, under null multinomial probabilities $f_1$ to $f_4$, for $1000$ samples of size $n=400$ and $R=100$ categories. The line represents the threshold $\chi_{0.95,99}^2$.\label{CQGetc1}}
\end{center}
\end{figure}

%

Quantile values of $Q$ tend to explode as $c$ grows whereas $G$ stays quite stable around $\chi_{0.95,99}^2$.
This behavior is the opposite of the one predicted by Ku.
For $f_1$, statistics $Q^{ab}$ and $G^{ab}$ lead to the rejection of the null hypothesis.
For $f_2$ to $f_4$ however, their quantiles lie below the critical line and ${\cal H}_0$ is accepted.
We thus have compensated for the rise of $Q$, and both our corrected statistics are stable.

\begin{table}
\centering
\caption{Empirical type I risks for $Q$, $Q^{ab}$, $G$, $G^{ab}$ and $RC^{2/3}$, for $1000$ samples of size $n=400$ and $R=100$ categories, at levels $\alpha=0.01$, $0.05$, $0.1$, for vector probabilities $f_1$ to $f_4$.\label{Type1ter}}
\begin{tabular}{cccccccc}
\hline
$\alpha$ & ${\cal H}_0$ & $mode(c)$ & $Q$ & $Q^{ab}$ & $G$ & $G^{ab}$ & $RC^{2/3}$ \\
\hline
$0.01$ & $f_1$ & $19$ & $0.031$ & $0.233$ & $0.003$ & $0.401$ & $0.003$ \\
 & $f_2$ & $47$ & $0.030$ & $0.005$ & $0$ & $0$ & $0$ \\
 & $f_3$ & $65$ & $0.027$ & $0$ & $0$ & $0$ & $0$ \\
 & $f_4$ & $84$ & $0.031$ & $0$ & $0$ & $0$ & $0$ \\
\\
$0.05$ & $f_1$ & $19$ & $0.044$ & $0.352$ & $0.010$ & $0.573$ & $0.017$ \\
 & $f_2$ & $47$ & $0.024$ & $0.005$ & $0$ & $0$ & $0$ \\
 & $f_3$ & $65$ & $0.069$ & $0$ & $0$ & $0$ & $0.006$ \\
 & $f_4$ & $84$ & $0.052$ & $0$ & $0$ & $0$ & $0$ \\
\\
$0.1$ & $f_1$ & $19$ & $0.130$ & $0.479$ & $0.033$ & $0.674$ & $0.039$ \\
 & $f_2$ & $47$ & $0.072$ & $0.005$ & $0$ & $0.005$ & $0$ \\
 & $f_3$ & $65$ & $0.137$ & $0$ & $0$ & $0$ & $0$ \\
 & $f_4$ & $84$ & $0.098$ & $0$ & $0$ & $0$ & $0.006$ \\
\hline
\end{tabular}
\end{table}

This analysis is confirmed by the computation of empirical risks of type~I for $\alpha=0.01$, $0.05$ and $0.1$ as showed in Table \ref{Type1ter}, and by the power study below.
Probabilities $f_1$ to $f_4$ are perturbed into $f'_1$ to $f'_4$ such that for all $j$ in $\{1,2,3,4\}$:
\begin{eqnarray}
\forall i \in \{1,\ldots,10\}, \: f'_j(i)&=&f_j(i)+1/300,\\
\forall i \in \{11,\ldots,90\}, \: f'_j(i)&=&f_j(i),\\
\forall i \in \{91,\ldots,100\}, \: f'_j(i)&=&f_j(i)-1/300.
\end{eqnarray}

Vectors are simulated with probabilities $f_1$ to $f_4$, and goodness of fit for $f'_1$ to $f'_4$ is tested.
The Tables \ref{Type1ter} and \ref{tab2} show that the empirical type I risks are lower for our corrections compared to the classical statistics when $c$ is quite important, whereas the empirical power is much higher for our corrections when $c$ is small.

\begin{table}
\centering
\caption{Empirical powers for $Q$, $Q^{ab}$, $G$, $G^{ab}$ and $RC^{2/3}$, for $1000$ samples of size $n=400$ and $R=100$ categories, at levels $\alpha=0.05$ for simulated vector probabilities $f_1$ to $f_4$, and null vector probabilities $f'_1$ to $f'_4$.\label{tab2}}
\begin{tabular}{cccccccc}
\hline
 ${\cal H}_0$ & ${\cal H}_1$ & $mode(c)$ & $Q$ & $Q^{ab}$ & $G$ & $G^{ab}$ & $RC^{2/3}$ \\
\hline
$f_1$ & $f'_1$ & $19$ & $0.233$ & $0.853$ & $0.322$ & $0.983$ & $0.157$ \\
$f_2$ & $f'_2$ & $47$ & $0.087$ & $0.026$ & $0.009$ & $0.061$ & $0.009$ \\
$f_3$ & $f'_3$ & $64$ & $0.229$ & $0.005$ & $0$ & $0$ & $0.027$ \\
$f_4$ & $f'_4$ & $84$ & $0.089$ & $0$ & $0$ & $0$ & $0$ \\
\hline
\end{tabular}
\end{table}

\section{Applications}

We apply the total independence test using the corrected statistics $Q^{ab}$ and $G^{ab}$ to two datasets involving two-dimensional tables.
For such tables, the hypotheses are written :
\begin{equation}
{\cal H}_0 :\quad p_{ij}= p_{i +} p_{+ j} \quad \text{ against } \quad {\cal H}_1 :\quad \exists \ (i_1,j_1) \in I \times J, \: p_{i_1 j_1} \neq p_{i_1 +} p_{+ j_1},
\end{equation}
where $p_{i +}$ and $p_{+ j}$ are the marginal distributions for the two characters featured in the table.
To ensure the condition \eqref{nonnul} we remove the empty lines $i$ of $\{1,\ldots,I\}$ such that $n_{i+}=0$, and the empty columns $j$ of $\{1,\ldots,J\}$ such that $n_{+j}=0$.

\subsection{Multi-marker approach for Systemic Sclerosis}

Table \ref{diplo} is the diplotypic table obtained from an association study in Humans looking for an association between three genetic markers on the gene TNFAIP3 and Systemic Sclerosis presented in \cite{r8}.
Empty columns have been removed.
A haplotype is the allelic distribution of markers on a chromosome, and a diplotype is the combination of both parental haplotypes.
Diplotypic tables, though more interesting than haplotypic tables because they take into account more information, are usually trickier to handle because they are sparse. 
Our corrected statistics can therefore be helpful in such situations.

Though nine haplotypes theoretically exist, only eight are observed, denoted H1 to H8, leading to $8^2=64$ diplotypes Hi/Hj.
Two samples are compared, affected versus sound subjects, on which we test the independence between the diplotype configuration and the health status of $n=794$ individuals.
\begin{table}
\centering
\caption{Diplotype table for the association between TNFAIP3 and Systemic Sclerosis..\label{diplo}}
\begin{tabular}{ccccccc}
\hline
Status & H1/H1 & H1/H2 & H1/H3 & H1/H4 & H1/H5 & H1/H6 \\
\hline
Sound & $98$ & $7$ & $116$ & $2$ & $71$ & $3$   \\ 
Affected & $91$ & $9$ & $104$ & $3$ & $70$ & $12$ \\ 
\\
& H2/H3 & H2/H5 & H2/H6 & H3/H3 & H3/H4 & H3/H5 \\ 
\cline{2-7}
Sound & $4$ & $2$ & $0$ & $34$ & $1$ & $42$ \\
Affected & $5$ & $4$ & $1$ & $30$ & $2$ & $40$ \\
\\
& H3/H6 & H4/H5 & H5/H5 & H5/H6 & &\\ 
\cline{2-7}
Sound & $2$ & $1$ & $13$ & $1$ & &\\
Affected & $7$ & $1$ & $13$ & $5$ & & \\
\hline
\end{tabular}
\end{table}

The table is of dimension $2 \times 16$, that is $R=32$ categories with $c=1$ zero and $s=16$ parameters.
There are exactly $16$ expected frequencies below $5$.
The chi-square quantile $\chi^2_{0.95,15} = 24.99$ is compared to the statistics :
$$
Q=14.62, Q^{ab}=20.76, G=15.82, G^{ab}=28.43, RC^{2/3}=14.85.
$$
Only $G^{ab}$ leads to reject the null hypothesis of independence.
This seems to be the right decision since it is confirmed by the single-markers approaches and haplotype tests in \cite{r8}, all showing a significative association between the markers and the disease.

\subsection{Trophic level and vegetables in the rivers of the Petite Camargue Alsacienne}

The search for a link between the trophic level and the vegetable composition of some rivers of the Petite Camargue Alsacienne in North-East France leads to Table \ref{Troph}.
A river can be either oligotrophic, mesotrophic or eutrophic, if its nutritive content is respectively poor, intermediate or high.
Uncommon vegetables are considered rare, exotic or polluto-tolerant.
To each river, a triplet of binary characteristics $(r,p,e)$ is assigned, indicating the presence ($1$) or the absence ($0$) of rare ($r$), exotic ($e$) and polluto-tolerant~($p$) species.
The original ecological study can be found in \cite{r9}.
We consider $n=21$ different rivers.
Two empty columns were removed from the original table, leading to Table \ref{Troph} of dimension $3 \times 6$, that is $R=18$ categories and $c=7$ zeros, with $s=17$ parameters.
Here are $3$ expected frequencies below~$0.5$.

\begin{table}
\centering
\caption{Contingency table for the joint study of trophic level and vegetable composition in rivers.\label{Troph}}
\begin{tabular}{ccccccc}
\hline
& \multicolumn{6}{c}{$(r,p,e)$}\\
Trophic level & $(0,0,0)$ & $(1,0,0)$ & $(0,1,0)$ & $(0,0,1)$ & $(1,1,0)$ & $(0,1,1)$\\
\hline
Oligotrophic & $0$ & $0$ & $3$ & $0$ & $3$ & $2$  \\
Mesotrophic & $2$ & $1$ & $0$ & $2$ & $1$ & $0$  \\ 
Eutrophic & $2$ & $0$ & $3$ & $1$ & $1$ & $0$  \\ 
\hline
\end{tabular}
\end{table}

Test statistics are compared to the chi-square quantile $\chi^2_{0.95,10} = 18.31$:
$$
Q=14.38, Q^{ab}=20.68, G=18.67, G^{ab}=26.05, RC^{2/3}=14.84.
$$
Both corrected statistics $Q^{ab}$ and $G^{ab}$ as well as $G$ lead to reject the null hypothesis, indicating an association between trophic level and vegetable composition.
A thorough study of the table shows that rare species tend to settle preferentially in oligotrophic rivers.
They are indeed better adapted to this kind of environment which tends to disappear from the rivers in the study.
Moreover, polluto-tolerant species constitute the majority of the vegetables in eutrophic rivers.
A eutrophic environment is competitive and these resistant species tend to get the best of it.

\subsection{Discussion}
We suggest to compute both $Q^{ab}$ and $G^{ab}$, and to reject the null hypothesis if at least one of them is larger than the chi-square threshold.

Our results tend to prove that this approach is relevant and our corrections efficient in sparse tables.
They are all the more interesting for the fact that sparse tables are usually left aside because the hypotheses needed to apply classical chi-square tests are not satisfied.

\appendix

\section{Appendix section}\label{app}

\begin{proof}[Proof of Proposition \ref{condition}]
Assume that the first $m$ of the $c$ frequencies $n_i, 1 \leqslant i \leqslant c$ are modified.
Let $n'_j, j \in \{c+1,\ldots,R\}$ compensate for these changes.
The likelihood inequality \eqref{likely} is then equivalent to :
\begin{equation} \label{lab2}
\frac{p_{c+1}^{(n_{c+1}-n'_{c+1})}}{(n_{c+1}-n'_{c+1}+1)!} \times \cdots \times \frac{p_{R}^{(n_{R}-n'_{R})}}{(n_{R}-n'_{R}+1)!} \geqslant \frac{p_1^{n'_1}}{n'_1!} \dots \frac{p_m^{n'_m}}{n'_m!}.
\end{equation}
Let us show that \eqref{cond2} implies \eqref{lab2}.
Applying \eqref{cond2} to each element of the left hand side of \eqref{lab2} with multiplicities such that :
$\displaystyle \sum_{j=c+1}^{R} (n_j-n'_j) = \sum_{i=1}^{m} n'_i,$
we get : 
\begin{equation}
\frac{p_{c+1}^{(n_{c+1}-n'_{c+1})}}{n^{(n_{c+1}-n'_{c+1}+1)}} \times \cdots \times \frac{p_{R}^{(n_{R}-n'_{R})}}{n^{(n_{R}-n'_{R}+1)}} \geqslant
p_1^{n'_1} \dots p_m^{n'_m}.
\end{equation}
As $(n_j-n'_j+1)! \leqslant n^{(n_j-n'_j)}$ for all $j$ in $\{c+1,\ldots,R\}$ and
$n'_i! \geqslant 1$ for all $i$ in $\{1,\ldots,m\},$
we deduce \eqref{lab2} and equivalently \eqref{likely}.
\end{proof}

\begin{proof}[Proof of Lemma \ref{lem2}]
Let us first show that:
\begin{equation}
\forall \epsilon >0, \lim_{n \to +\infty} \PP (C_n>\epsilon) =0.
\end{equation}
For each $n$ we compute $\PP (C_n=c)$ for $c$ in $\{0,\ldots,R-1\}.$
Let $p^0$ be the probability under the null hypothesis $p^0=(p^0_1,\ldots,p^0_c,\ p^0_{c+1},\ldots,p^0_R)$.
A subject belongs to one of the first $c$ cells with probability $q^0=p^0_1 + \dots + p^0_c$ and to one of the $R-c$ last cells with probability $1-q^0=p^0_{c+1} + \dots + p^0_R$, with $q^0$ in $]0,1[$. 

Let us consider now the binomial distribution $\mathcal B(n;q^0)$ and write the probability $\PP (C_n=c)$ of obtaining a table containing exactly $c$ zeros placed anywhere :
\begin{eqnarray}
\quad \quad \PP (C_n=c) &=& \binom{R}{c} \PP (N_1=\ldots=N_c=0,\ N_{c+1} \neq 0,\ldots,N_R \neq 0),\\
&=&\binom{R}{c} (1-q^0)^n.
\end{eqnarray}
Then :
\begin{equation}
\PP (C_n=0)= 1- \sum_{c=1}^{R-1} \PP(C_n=c)=1-\PP(C_n>\epsilon), \quad \forall \epsilon \in ]0,1[.
\end{equation}
As $\binom{R}{c}$ is bounded and $(1-q^0)^n$ tends to $0$, the probability $\PP(C_n=c)$ also converges to $0$ for all $1 \leqslant c \leqslant R-1$ when $n$ tends to $+\infty$, and so does the corresponding sum over $c$.

We now use Borel-Cantelli's Lemma to conclude that $C_n$ converges to $0$ almost surely.
Indeed, for $\epsilon >0$:
\begin{equation}
\sum_{n \geqslant 1} \PP(C_n > \epsilon) \leqslant \sum_{n \geqslant 1} \PP(C_n \geqslant 1)=\sum_{c=1}^{R-1} \binom{R}{c} \frac{1}{q^0} < + \infty.
\end{equation}
\end{proof}

\section*{Acknowledgements}
I want to thank I. Combroux and M. Guedj for letting me use their data sets as illustrations for this work.
I am also truly grateful to P. Nobelis for his help and his advice.

\end{document}